\documentclass[10pt,reqno,final]{article}

\usepackage{amsmath,amsfonts,amssymb,amsthm,version}
\usepackage{mathrsfs,fancybox,pifont}
\usepackage{graphicx}
\usepackage{url,hyperref}
\usepackage[notcite,notref]{showkeys}
\usepackage{color}
\usepackage{subfigure,multirow}
\usepackage{epstopdf}
\usepackage{cases}
\usepackage{mathtools}
\usepackage{algorithm,algorithmic}
\usepackage{authblk}
\usepackage{lipsum}
\usepackage{geometry}
\usepackage{enumerate}

\allowdisplaybreaks

\setlength{\textwidth}{15cm}
\setlength{\textheight}{21.6cm}
\setlength{\oddsidemargin}{.5cm}
\setlength{\evensidemargin}{.5cm}
\baselineskip 13pt


\numberwithin{equation}{section}
\numberwithin{figure}{section}
\numberwithin{table}{section}

\theoremstyle{plain}
\newtheorem{theorem}{Theorem}[section]
\newtheorem{lemma}{Lemma}[section]
\newtheorem{corollary}{Corollary}[section]

\theoremstyle{definition}
\newtheorem{definition}{Definition}[section]

\theoremstyle{remark}


\title{On the stability of orthogonally Jensen additive and quadratic functional equation\thanks{The work was supported by the National Natural Science Foundation of P. R. China (Nos. 11971493 and 12071491).} }
\author[1]{Linlin Fu\thanks{fullin3@mail.sysu.edu.cn}}
\author[1]{Qi Liu\thanks{Corresponding author. {\it liuq325@mail2.sysu.edu.cn}}}
\author[1]{Yongjin Li\thanks{stslyj@mail.sysu.edu.cn}}
\affil[1]{Department of Mathematics, Sun Yat-sen University\\ Guangzhou 510275, P.R. China}



\date{}

\begin{document}
	\maketitle

	\begin{abstract}
		We consider the stability of the orthogonal Jensen additive and quadratic equations in $F$-spaces, through applying and extending the approach to the proof of a 2010 result of  W.Frchner and J.Sikorska, we presenting a new method to get the stability. Moreover, we work in a more general and natural condition than considered before by other antuors.

		\medskip
		\noindent{\bf Keywords}: stability, orthogonality, Jensen additive mapping, Jensen quadratic mapping, $F$-space

		\medskip
		\noindent{\bf 2000 Mathematics Subject Classification: } 39B55, 39B52, 39B82, 46H25.
	\end{abstract}

	\section{Introduction and preliminaries}

\quad Assume that $X$ is a real inner product space and $f: X \rightarrow \mathbb{R}$ is a solution of the orthogonal Cauchy functional equation $f(x+y)=f(x)+f(y),\, \langle x, y\rangle=0$. By the Pythagorean theorem $f(x)=\|x\|^{2}$ is a solution of the conditional equation. Of course, this function does not satisfy the additivity equation everywhere. Thus orthogonal Cauchy equation is not equivalent to the classic Cauchy equation on the whole inner product space. This phenomenon may show the significance of study of orthogonal Cauchy equation.

In the recent decades, stability of functional equations have been investigated by many mathematicians (see \cite{1995cfea}). The first author treating the stability of the Cauchy equation was D.H. Hyers \cite{1941otso} by proving that if $f$ is a mapping from a normed space $X$ into a Banach space satisfying $\| f(x+y)-$ $f(x)-f(y) \| \leq \epsilon$ for some $\epsilon>0$, then there is a unique additive mapping $g: X \rightarrow Y$ such that $\|f(x)-g(x)\| \leq \epsilon$.

R. Ger and J. Sikorska \cite{1995soto} investigated the orthogonal stability of the Cauchy functional equation $f(x+y)=f(x)+f(y)$, namely, they showed that if $f$ is a function from an orthogonality space $X$ into a real Banach space $Y$ and $\|f(x+y)-f(x)-f(y)\| \leq \epsilon$ for all $x, y \in X$ with $x \perp y$ and some $\epsilon>0$, then there exists exactly one orthogonally additive mapping $g: X \rightarrow Y$ such that $\|f(x)-g(x)\| \leq \frac{16}{3} \epsilon$ for all $x \in X$.

The first author treating the stability of the quadratic equation was $\mathrm{F}$. Skof \cite{1983plea} by proving that if $f$ is a mapping from a normed space $X$ into a Banach space $Y$ satisfying $\|f(x+y)+f(x-y)-2 f(x)-2 f(y)\| \leq \epsilon$ for some $\epsilon>0$, then there is a unique quadratic function $g: X \rightarrow Y$ such that $\|f(x)-g(x)\| \leq \frac{\epsilon}{2}$. P. W. Cholewa \cite{1984rots} extended Skof's theorem by replacing $X$ by an abelian group G. Skof's result was later generalized by S. Czerwik \cite{1992otso} in the spirit of Hyers-Ulam. The stability problem of functional equations has been extensively investigated by some mathematicians (see \cite{2002feai,2003sofe,1998sofe,1978otso,2000tpos,2000otso,2003feia,1960pimm}).

The orthogonally quadratic equation
\begin{equation*}
    f(x+y)+f(x-y)=2 f(x)+2 f(y), x \perp y
\end{equation*}
was first investigated by $\mathrm{F}$. Vajzović \cite{1967udfh} when $X$ is a Hilbert space, $Y$ is the scalar field, $f$ is continuous and $\perp$ means the Hilbert space orthogonality. Later, H. Drljević \cite{1986oafw}, M. Fochi \cite{1989feoa}, M. Moslehian \cite{2006otso,2005otos} and G. Szabó \cite{1990soqm} generalized this result.

The notion of orthogonality goes a long way back in time and various extensions have been introduced over the last decades. In particular, proposing the notion of orthogonality in normed linear spaces has been the object of extensive efforts of many mathematicians.

Let us recall the standard definition of orthogonality by R\"{a}tz \cite{1985ooam}:
\begin{definition}
    Let $X$ be a real linear space with $\operatorname{dim}X \geq  2$ and let $\perp$ be a binary relation
    on $X$ such that

    {\rm (1)} $x\perp 0$ and $0\perp x$ for all $x\in X$;

    {\rm (2)}  if $x,y\in X \backslash \{0\}$  and $x\perp y$, then $x$ and $y$ are linearly independent;

    {\rm (3)}	 if $x,y\in X$ and $x\perp y$, then for all $\alpha,\beta \in \mathbb{R}$ we have $\alpha x\perp \beta y$;

    {\rm (4)}	  for any two-dimensional subspace $P$ of $X$ and for every $x \in  P$ , $\lambda \in [0, \infty)$,
    there exists $y\in P$ such that $x\perp y$ and $x+y\perp \lambda x-y$.

    An ordered pair $(X,\perp)$ is called an orthogonality space.

\end{definition}

In 2010, Fechner and Sikorska \cite{2010otso} studied the stability of orthogonality and proposed the definition of orthogonality as follows.

\begin{definition}\label{dt2}
    Let $X$ be an Abelian group and let $\perp$ be a binary relation defined on $X$ with the properties:

    {\rm (1)} if $x,y\in X$ and $x\perp y$, then $x\perp -y,-x\perp y$ and $2x\perp 2y$;

    {\rm (2)} for every $x\in X$, there  exists a $y \in X$ such that $x\perp y$ and $x+y\perp x-y$.

\end{definition}

It's worth noting that every orthogonal space satisfies these conditions   as well as any normed linear space with the isosceles orthogonality, but Pythagorean orthogonality no longer satisfies these conditions.

Although various studies on stability have been successfully conducted, there are not many corresponding stability results due to the non-linear structure of the infinite-dimensional $F$-space.
The nonlinear structure of $F$-space plays an important role in functional analysis and other mathematical fields.
The $L^p([0,1])$ for $0<p<1$ equipped with the metric $d(f,g)=\int \vert f(x)-g(x) \vert^pdx$
is an example of an  $F$-space  but not a Banach space.
Besides these, for $F$-spaces, and we recommend readers to read the literature \cite{2008nsos,1981cwzd}.

\begin{definition}\label{dt1}
    Consider $X$ be a linear space. A non-negative valued function $\|\cdot\|$ achieves
    an $F$-norm if satisfies the following conditions:

    {\rm (1)} $\|x\|= 0$ if and only if $x=0$;

    {\rm (2)} $\|\lambda x\|=\|x\|$ for all  $\lambda$,  $|\lambda|=1$;

    {\rm (3)} $\|x+y\|\leq\|x\|+\|y\|$ for all $x,y\in X$;

    {\rm (4)} $\|\lambda_{n}x\|\rightarrow 0$ provided  $\lambda_{n}\rightarrow 0$;

    {\rm (5)} $\|\lambda x_{n}\|\rightarrow 0$ provided  $x_{n}\rightarrow 0$;

    {\rm (6)} $\|\lambda_{n} x_{n}\|\rightarrow 0$ provided  $\lambda_{n}\rightarrow 0, x_{n}\rightarrow 0$.

\end{definition}
{ Then $(X,\|\cdot\|)$ is called an $F^\ast$-space. An F-space is a complete $F^\ast$-space}.

An $F$-norm  is called $\beta$-homogeneous $(\beta>0)$ if $\|tx\|=|t|^{\beta}\|x\|$ for all $x\in X$ and all $t\in \mathcal{C}$ (see \cite{1985mls,1979mmit}).

If a quasi-norm  is $p$-subadditive, then it is called $p$-norm $(0 < p < 1)$.
In other words, if it satisfies
\begin{equation*}
    \Vert x+y\Vert^p\leq \Vert x\Vert^p+\Vert y\Vert^p,~~x,y\in X.
\end{equation*}

We note that the $p$-subadditive quasi-norm  $\Vert \cdot\Vert $ induces an $F$ norm.
We refer the reader to \cite{2008nsos} and  \cite{1984afs} for background on it.

\begin{definition}\rm{\cite{2003foca}}
    A quasi–norm on $\Vert \cdot\Vert$ on vector space $X$ over a field $K(
    \mathbb{R})$ is a map $X\longrightarrow[0, \infty)$ with the following properties:

    ${\text { (1) }\|x\|=0 \text { if and only if } x=0};$

    ${\text { (2) }\|a x\|=|a|\|x\|,~~ a \in \mathbb{R}, x \in X};$

    ${\text { (3) }\|x+y\| \leq C(\|x\|+\|y\|),}~~ {x, y \in X}.$

    where $C \geq 1$ is a constant independent of $x, y \in X$. The smallest $C$ for which
    $(3)$ holds is called the quasi-norm constant of $(X,\Vert \cdot\Vert)$.
\end{definition}

It is worth to note that the well-known Aoki–Rolewicz theorem \cite{1985mls} in nonlocally convex theory, that is,
 which asserts that for some $0 < p\leq 1$, every quasi-norm
admits  an equivalent $p$-norm.

Various more results for the stability of  functional equations in quasi-Banach spaces
can be seen in  \cite{2018tgho,2008iami}. However, the results are more interesting and meaningful
when orthogonality is taken into account.

Let $X$ be an orthogonality space and $Y$ a real Banach space. A mapping $f: X \rightarrow Y$ is called orthogonally Jensen additive if it satisfies the so-called orthogonally Jensen additive functional equation

\begin{equation}\label{Jensen addivtive}
    2 f\left(\frac{x+y}{2}\right)=f(x)+f(y)
\end{equation}
for all $x, y \in X$ with $x \perp y$. A mapping $f: X \rightarrow Y$ is called orthogonally Jensen quadratic if it satisfies the so-called orthogonally Jensen quadratic functional equation

\begin{equation}\label{Jensen quadratic}
    2 f\left(\frac{x+y}{2}\right)+2 f\left(\frac{x-y}{2}\right)=f(x)+f(y)
\end{equation}
for all $x, y \in X$ with $x \perp y$.

In this paper, we apply some ideas and extend the results from \cite{2010otso} to prove the stability of the orthogonally Jensen additive functional equation (\ref{Jensen addivtive}) and of the orthogonally Jensen quadratic functional equation (\ref{Jensen quadratic}) in $F$-spaces and quasi-Banach spaces.

    \section{Stability of the Jensen additive functional equations}
\quad Applying some ideas from \cite{2010otso}, we deal with the conditional stability problem for (\ref{Jensen addivtive}).

First, we give the following lemma which is important for our main results in this paper.

\begin{lemma}\label{the key lemma}
    Let $X$ be an Abelian group, and $Y$ be a $\beta$-homogeneous $F$-space.
    For $\varepsilon\geq 0$, assume $f: X\rightarrow Y$ be a mapping such that for all
    $x, y \in X$ and a constant $C>0$ one has
    \begin{equation}\label{183inequation}
        \|f(2x)-\frac{3}{8}f(4x)+\frac{1}{8}f(-4x)\|\leq C.
    \end{equation}
Let $$h(x,n)=\left\|f(2 x)-\frac{2^{n}+1}{2 \cdot 4^{n}} f\left(2^{n+1} x\right)+\frac{2^{n}-1}{2 \cdot 4^{n}} f\left(-2^{n+1} x\right)\right\|$$ and $$g_{n}(x)=\frac{2^{n}+1}{2 \cdot 4^{n}} f\left(2^{n} x\right)-\frac{2^{n}-1}{2 \cdot 4^{n}} f\left(-2^{n} x\right), \quad n \in \mathbb{N}.$$
Then we have that
\begin{enumerate}[(1)]
    \item $|h(x,n+1)-h(x,n)|\leq C\left[ \left( \frac{2^n+1}{2\cdot 4^n} \right) ^{\beta}+\left( \frac{2^n-1}{2\cdot 4^n} \right) ^{\beta} \right]$ and moreover, we have
    \begin{equation}\label{estimation of h(x,n)}
        h(x,n)\leq C
        \left( \sum_{n=1}^{\infty}{\left[ \left( \frac{2^n+1}{2\cdot 4^n} \right) ^{\beta}+\left( \frac{2^n-1}{2\cdot 4^n} \right) ^{\beta} \right]}+1 \right)
    \end{equation}
for all $n\in \mathbb{N}$.
    \item $(g_n(x))_{n\in \mathbb{N}}$ is a Cauchy sequence for every $x\in X$. Hence, the mapping $g : X \rightarrow Y$ can be defined as:
    \begin{equation*}
        g(x):=\lim _{n \rightarrow \infty} g_{n}(x)
    \end{equation*}
and then we have
    \begin{equation*}
        \|f(2x)-g(2x)\|\leq C \left( \sum_{n=1}^{\infty}{\left[ \left( \frac{2^n+1}{2\cdot 4^n} \right) ^{\beta}+\left( \frac{2^n-1}{2\cdot 4^n} \right) ^{\beta} \right]}+1 \right)
    \end{equation*}
for all $x\in X.$
\end{enumerate}
\end{lemma}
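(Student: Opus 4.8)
The plan is to reduce both parts to a single algebraic identity for the increments of $(g_n)$, after first recording the bookkeeping fact that the vector inside $h(x,n)$ is exactly $f(2x)-g_n(2x)$. Indeed, substituting $2x$ for $x$ in the definition of $g_n$ gives $g_n(2x)=\frac{2^{n}+1}{2\cdot 4^{n}}f(2^{n+1}x)-\frac{2^{n}-1}{2\cdot 4^{n}}f(-2^{n+1}x)$, so that $h(x,n)=\|f(2x)-g_n(2x)\|$. In particular $h(x,1)=\|f(2x)-\tfrac38 f(4x)+\tfrac18 f(-4x)\|\le C$ is just the hypothesis (\ref{183inequation}) itself, which will anchor the telescoping in part (1).

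Next I would establish the key identity: for every $x\in X$ and every $n\ge 1$,
\[
g_{n+1}(x)-g_{n}(x)=-\frac{2^{n}+1}{2\cdot 4^{n}}\left[f(2^{n}x)-\tfrac{3}{8}f(2^{n+1}x)+\tfrac{1}{8}f(-2^{n+1}x)\right]+\frac{2^{n}-1}{2\cdot 4^{n}}\left[f(-2^{n}x)-\tfrac{3}{8}f(-2^{n+1}x)+\tfrac{1}{8}f(2^{n+1}x)\right].
\]
This is verified by expanding the right-hand side and checking that the coefficients of $f(2^{n}x)$, $f(-2^{n}x)$, $f(2^{n+1}x)$, $f(-2^{n+1}x)$ collapse — using $3(2^{n}+1)+(2^{n}-1)=2(2^{n+1}+1)$ and $(2^{n}+1)+3(2^{n}-1)=2(2^{n+1}-1)$ — to the coefficients in $g_{n+1}(x)-g_n(x)$. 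This coefficient computation is the only place requiring real calculation, and it is the main (though entirely routine) obstacle.

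The two bracketed vectors are precisely the left-hand member of (\ref{183inequation}) evaluated at $2^{n-1}x$ and at $-2^{n-1}x$, hence each has norm at most $C$. Applying $\beta$-homogeneity $\|tu\|=|t|^{\beta}\|u\|$ and subadditivity of the $F$-norm to the identity then yields, for all $x\in X$,
\[
\|g_{n+1}(x)-g_{n}(x)\|\le C\left[\left(\frac{2^{n}+1}{2\cdot 4^{n}}\right)^{\beta}+\left(\frac{2^{n}-1}{2\cdot 4^{n}}\right)^{\beta}\right].
\]
For part (1) I would apply this estimate at the point $2x$ and combine it with $h(x,n)=\|f(2x)-g_n(2x)\|$ and the reverse triangle inequality $\bigl|\,\|u\|-\|v\|\,\bigr|\le\|u-v\|$ (valid since property (2) gives $\|v-u\|=\|u-v\|$); this delivers the claimed increment bound for $|h(x,n+1)-h(x,n)|$, the coefficients being unchanged because they do not depend on the argument. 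Telescoping $h(x,n)=h(x,1)+\sum_{k=1}^{n-1}\bigl(h(x,k+1)-h(x,k)\bigr)$ from $h(x,1)\le C$, and dominating the finite sum by the full series, produces (\ref{estimation of h(x,n)}). Convergence of that series is immediate from $\frac{2^{n}\pm1}{2\cdot 4^{n}}\le 2^{-n}$, so its terms are bounded by $2\cdot 2^{-n\beta}$ with $\beta>0$.

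For part (2), the same increment estimate shows $\sum_{n}\|g_{n+1}(x)-g_{n}(x)\|<\infty$, so $(g_n(x))_{n}$ is Cauchy for every fixed $x$; completeness of the $F$-space $Y$ furnishes $g(x)=\lim_{n\to\infty}g_n(x)$. Finally, letting $n\to\infty$ in $h(x,n)=\|f(2x)-g_n(2x)\|$, using $g_n(2x)\to g(2x)$ and the continuity of the $F$-norm, the uniform bound (\ref{estimation of h(x,n)}) passes to the limit to give $\|f(2x)-g(2x)\|\le C\bigl(\sum_{n=1}^{\infty}[(\frac{2^{n}+1}{2\cdot 4^{n}})^{\beta}+(\frac{2^{n}-1}{2\cdot 4^{n}})^{\beta}]+1\bigr)$, as required.
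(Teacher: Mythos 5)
Your proposal is correct and follows essentially the same route as the paper: the same telescoping of $h(x,n)$ from $h(x,1)\le C$, and the same algebraic identity expressing $g_{n+1}-g_n$ as a combination of the hypothesis (\ref{183inequation}) evaluated at $\pm 2^{n-1}x$, yielding the Cauchy property and the limit bound. Your organization is in fact slightly tidier — by noting $h(x,n)=\|f(2x)-g_n(2x)\|$ you derive the increment bound on $h$ from the one on $g_n$ via the reverse triangle inequality (which also cleanly justifies the two-sided bound $|h(x,n+1)-h(x,n)|$ that the paper asserts after proving only one direction) — but this is a cosmetic streamlining of the same argument, not a different method.
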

\begin{proof}
        First, with the help of (\ref{183inequation}), through a simple estimate we obtain

    $$
    \begin{aligned}&\left\|f(2 x)-\frac{2^{n+1}+1}{2 \cdot 4^{n+1}} f\left(2^{n+2} x\right)+\frac{2^{n+1}-1}{2 \cdot 4^{n+1}} f\left(-2^{n+2} x\right)\right\| \\
        & \leq\left\|f(2 x)-\frac{2^{n}+1}{2 \cdot 4^{n}} f\left(2^{n+1} x\right)+\frac{2^{n}-1}{2 \cdot 4^{n}} f\left(-2^{n+1} x\right)\right\| \\
        & +\bigg(\frac{2^{n}+1}{2 \cdot 4^{n}}\bigg)^\beta\left\|f\left(2^{n+1} x\right)-\frac{3}{8} f\left(2^{n+2} x\right)+\frac{1}{8} f\left(-2^{n+2} x\right)\right\| \\
        & +\bigg(\frac{2^{n}-1}{2 \cdot 4^{n}}\bigg)^\beta\left\|f\left(-2^{n+1} x\right)-\frac{3}{8} f\left(-2^{n+2} x\right)+\frac{1}{8} f\left(2^{n+2} x\right)\right\| \\
        & \leq\left\|f(2 x)-\frac{2^{n}+1}{2 \cdot 4^{n}} f\left(2^{n+1} x\right)+\frac{2^{n}-1}{2 \cdot 4^{n}} f\left(-2^{n+1} x\right)\right\|\\
        &+C\left[ \left( \frac{2^n+1}{2\cdot 4^n} \right) ^{\beta}+\left( \frac{2^n-1}{2\cdot 4^n} \right) ^{\beta} \right]
    \end{aligned}
    $$
    which implies
    $$
    \begin{aligned}&\left\|f(2 x)-\frac{2^{n+1}+1}{2 \cdot 4^{n+1}} f\left(2^{n+2} x\right)+\frac{2^{n+1}-1}{2 \cdot 4^{n+1}} f\left(-2^{n+2} x\right)\right\| \\& -\left\|f(2 x)-\frac{2^{n}+1}{2 \cdot 4^{n}} f\left(2^{n+1} x\right)+\frac{2^{n}-1}{2 \cdot 4^{n}} f\left(-2^{n+1} x\right)\right\|\\
        &\leq C\left[ \left( \frac{2^n+1}{2\cdot 4^n} \right) ^{\beta}+\left( \frac{2^n-1}{2\cdot 4^n} \right) ^{\beta} \right]
    \end{aligned}
    $$
    Now we let
    $$
    h(x,n)=\left\|f(2 x)-\frac{2^{n}+1}{2 \cdot 4^{n}} f\left(2^{n+1} x\right)+\frac{2^{n}-1}{2 \cdot 4^{n}} f\left(-2^{n+1} x\right)\right\|,
    $$
    so we have that
    $$
    |h(x,n+1)-h(x,n)|\leq C\left[ \left( \frac{2^n+1}{2\cdot 4^n} \right) ^{\beta}+\left( \frac{2^n-1}{2\cdot 4^n} \right) ^{\beta} \right],
    $$
    and then
    \begin{align*}
        h(x,n)&=\sum_{i=2}^{n}(h(x,i)-h(x,i-1))+h(x,1)\\
        &\leq C\left( \sum_{n=1}^{\infty}{\left[ \left( \frac{2^n+1}{2\cdot 4^n} \right) ^{\beta}+\left( \frac{2^n-1}{2\cdot 4^n} \right) ^{\beta} \right] +1} \right).
    \end{align*}
    This means that
    \begin{equation*}
        \begin{aligned}
            &\left\|f(2 x)-\frac{2^{n}+1}{2 \cdot 4^{n}} f\left(2^{n+1} x\right)+\frac{2^{n}-1}{2 \cdot 4^{n}} f\left(-2^{n+1} x\right)\right\| \\
            &\leq C\left( \sum_{n=1}^{\infty}{\left[ \left( \frac{2^n+1}{2\cdot 4^n} \right) ^{\beta}+\left( \frac{2^n-1}{2\cdot 4^n} \right) ^{\beta} \right] +1} \right), \quad x\in X.
        \end{aligned}
    \end{equation*}
    The next step is to prove that for each $x \in X$ the sequence
    $$
    g_{n}(x):=\frac{2^{n}+1}{2 \cdot 4^{n}} f\left(2^{n} x\right)-\frac{2^{n}-1}{2 \cdot 4^{n}} f\left(-2^{n} x\right), \quad n \in \mathbb{N}
    $$
    is convergent in $Y$. Since $Y$ is complete, it suffices to show that $(g_n(x))_{n\in \mathbb{N}}$ is
    a Cauchy sequence for every $x \in X$. Applying estimate (\ref{183inequation} ) twice then we have
    \begin{equation*}
        \begin{aligned}\left\|g_{n}(x)-g_{n+1}(x)\right\|=& \bigg\| \frac{2^{n}+1}{2 \cdot 4^{n}}\left(f\left(2^{n} x\right)-\frac{3}{8} f\left(2^{n+1} x\right)+\frac{1}{8} f\left(-2^{n+1} x\right)\right) \\ &-\frac{2^{n}-1}{2 \cdot 4^{n}}\left(f\left(-2^{n} x\right)-\frac{3}{8} f\left(-2^{n+1} x\right)+\frac{1}{8} f\left(2^{n+1} x\right)\right) \bigg\| \\
            \leq &C\left[ \left( \frac{2^n+1}{2\cdot 4^n} \right) ^{\beta}+\left( \frac{2^n-1}{2\cdot 4^n} \right) ^{\beta} \right]
        \end{aligned}
    \end{equation*}
    for each $n \in \mathbb{N}$, which gives us  that $(g_n(x))_{n\in N}$ is a Cauchy sequence.

    Hence, the mapping $g : X \rightarrow Y$ can be defined as:
    \begin{equation*}
        g(x):=\lim _{n \rightarrow \infty} g_{n}(x)
    \end{equation*}
    for all $x \in X$.
    Combining with (\ref{estimation of h(x,n)}) we have
    \begin{equation*}
            \Vert f(2x) - g(2x)\Vert \leq  C\left( \sum_{n=1}^{\infty}{\left[ \left( \frac{2^n+1}{2\cdot 4^n} \right) ^{\beta}+\left( \frac{2^n-1}{2\cdot 4^n} \right) ^{\beta} \right] +1} \right)
    \end{equation*}
    with $x \in X.$
\end{proof}

Next, we would like to show the main result of this section: In this section, let $X$ be an Abelian group and let $\perp$ be a binary relation defined on $X$ with the properties:

($a$) for all $x\in X, 0\perp x$ or $x\perp 0$;

($b$) if $x, y \in X$ and $x\perp y$, then $-x\perp -y$ and $2x\perp 2y$.

\begin{theorem}\label{main theorem for additive}
    Let $X$ be an Abelian group, and $Y$ be a $\beta$-homogeneous $F$-space.
    For $\varepsilon\geq 0$, assume $f: X\rightarrow Y$ be a mapping such that for all
    $x, y \in X$ one has
    \begin{gather}
        x \perp y \quad \text { implies } \quad\|2f\left(\frac{x+y}{2}\right)-f(x)-f(y)\| \leq \varepsilon\label{epsilon Jensen additive}\\
        \text{and }\quad \|f(x)+f(-x)\|\leq \varepsilon. \label{odd inequation condition}
    \end{gather}

    Then there exists a mapping $g: X\rightarrow Y$ such that
    \begin{equation}\label{orth implies Jensen additive}
        x \perp y \quad \text { implies } \quad 2g\left(\frac{x+y}{2}\right)=g(x)+g(y)
    \end{equation}
    and
    \begin{equation}\label{dist(f,g) for Jensen additive}
        \Vert f(x)-g(x)\Vert  \leq \left( \sum_{n=1}^{\infty}{\left[ \left( \frac{2^n+1}{2\cdot 4^n} \right) ^{\beta}+\left( \frac{2^n-1}{2\cdot 4^n} \right) ^{\beta} \right] +1} \right) \cdot \frac{1+2^{\beta}+4^{\beta}+8^{\beta}+16^{\beta}}{8^{\beta}}\cdot \varepsilon
    \end{equation}
    for all $x \in 2X = \{2x : x \in X\}$. Moreover, the mapping $g$ is unique on the
    set $2X$.
\end{theorem}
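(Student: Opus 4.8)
The plan is to deduce the theorem from \refl{the key lemma}: first verify its hypothesis \eqref{183inequation} with the explicit constant
\[
C=\frac{1+2^{\beta}+4^{\beta}+8^{\beta}+16^{\beta}}{8^{\beta}}\,\varepsilon ,
\]
then read off \eqref{dist(f,g) for Jensen additive} from the lemma, and finally establish \eqref{orth implies Jensen additive} and uniqueness for the limit map $g$. To prepare, I would record two elementary consequences of the hypotheses. Using $(a)$, for each $v\in X$ one of $0\perp v$, $v\perp 0$ holds, and in either case \eqref{epsilon Jensen additive} applied to that pair gives $\|2f(v/2)-f(v)-f(0)\|\le\varepsilon$; replacing $v$ by $2v$ this reads $\|f(2v)-2f(v)+f(0)\|\le\varepsilon$. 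From \eqref{odd inequation condition} at $v=0$ and $\beta$-homogeneity, $2^{\beta}\|f(0)\|\le\varepsilon$, so $\|f(0)\|\le 2^{-\beta}\varepsilon$, while at a general $v$ we keep $\|f(v)+f(-v)\|\le\varepsilon$.

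Writing $f(2x)-\tfrac38 f(4x)+\tfrac18 f(-4x)=\tfrac18\bigl(8f(2x)-3f(4x)+f(-4x)\bigr)$, I would peel off the terms using the three estimates above: replace $f(-4x)$ by $-f(4x)$, then $f(4x)$ by $2f(2x)-f(0)$, and finally absorb $f(0)$. Inserting these and bounding each resulting term by the triangle inequality and $\beta$-homogeneity yields \eqref{183inequation} with the constant $C$ displayed above, the five summands $1,2^{\beta},4^{\beta},8^{\beta},16^{\beta}$ reflecting the weights of $8f(2x),3f(4x),f(-4x)$ and of the correction by $f(0)$. With \eqref{183inequation} in hand, \refl{the key lemma} furnishes $g(x)=\lim_{n}g_{n}(x)$ together with $\|f(2x)-g(2x)\|\le C\bigl(\sum_{n=1}^{\infty}[\cdots]+1\bigr)$; since every element of $2X$ is of the form $2x$, this is exactly \eqref{dist(f,g) for Jensen additive}.

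To verify \eqref{orth implies Jensen additive}, fix $x\perp y$. By $(b)$ and induction, $2^{n}x\perp 2^{n}y$ and $-2^{n}x\perp -2^{n}y$ for every $n$, so \eqref{epsilon Jensen additive} controls the Jensen defect of $f$ at these scaled pairs. Substituting into the definition of $g_{n}$, the two defects combine to give
\[
\Bigl\|2g_{n}\!\left(\tfrac{x+y}{2}\right)-g_{n}(x)-g_{n}(y)\Bigr\|
\le\Bigl[\bigl(\tfrac{2^{n}+1}{2\cdot4^{n}}\bigr)^{\beta}+\bigl(\tfrac{2^{n}-1}{2\cdot4^{n}}\bigr)^{\beta}\Bigr]\varepsilon\longrightarrow0 .
\]
Because addition and multiplication by $2$ are continuous in the $F$-space $Y$, letting $n\to\infty$ gives $2g\!\left(\tfrac{x+y}{2}\right)=g(x)+g(y)$, which is \eqref{orth implies Jensen additive}.

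For uniqueness on $2X$, suppose $g'$ also satisfies \eqref{orth implies Jensen additive} and \eqref{dist(f,g) for Jensen additive}, and set $d=g-g'$. Then $d$ solves \eqref{orth implies Jensen additive}, and $(a)$ gives $d(2v)=2d(v)-d(0)$, hence $d(2^{n}x)=2^{n}\bigl(d(x)-d(0)\bigr)+d(0)$. On the subgroup $2X$ the points $2^{n}x$ remain in $2X$, where $d$ is bounded by twice the right-hand side of \eqref{dist(f,g) for Jensen additive}; by $\beta$-homogeneity $2^{n\beta}\|d(x)-d(0)\|$ then stays bounded in $n$, forcing $d(x)=d(0)$, so $d$ is constant on $2X$. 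To kill this constant I would note that the even part of any solution of \eqref{orth implies Jensen additive} again solves it and, being bounded on $2X$ (via \eqref{odd inequation condition} and \eqref{dist(f,g) for Jensen additive}), is constant by the same doubling argument; since $g(0)=\lim_{n}4^{-n}f(0)=0$ this forces $g$ itself to be odd on $2X$. I expect this final identification of the additive constant to be the main obstacle: constants themselves solve \eqref{orth implies Jensen additive}, so the uniqueness is genuine only after normalizing competitors by $g'(0)=0$ (equivalently, requiring them odd), a normalization the constructed $g$ automatically satisfies; with it, $d(0)=0$ and $g=g'$ on $2X$.
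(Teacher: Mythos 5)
Your proposal follows the paper's route for the existence part --- derive \eqref{183inequation} with $C=\frac{1+2^{\beta}+4^{\beta}+8^{\beta}+16^{\beta}}{8^{\beta}}\varepsilon$ from the two hypotheses via $\|f(2v)-2f(v)+f(0)\|\le\varepsilon$ and $\|f(0)\|\le 2^{-\beta}\varepsilon$, invoke \refl{the key lemma}, and pass to the limit in $\|2g_{n}(\tfrac{x+y}{2})-g_{n}(x)-g_{n}(y)\|$ --- and this part is correct; in fact the decomposition you describe (trade $f(-4x)$ for $-f(4x)$ using \eqref{odd inequation condition}, then apply the doubling estimate) yields the sharper constant $(1+2^{\beta}+4^{\beta})8^{-\beta}\varepsilon$, so your attribution of all five summands $1,\dots,16^{\beta}$ does not match your own computation, but since a smaller constant implies the stated bound this is harmless. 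Where you genuinely diverge is uniqueness. The paper subtracts $g'$ from $g$ and re-applies the identity behind \eqref{f(2x)inequ for addtive} to $g-g'$ ``with $\varepsilon=0$''; but that identity was derived using \emph{both} \eqref{epsilon Jensen additive} and the oddness condition \eqref{odd inequation condition}, and $g-g'$ need not be odd, so that step is unjustified. You instead iterate $d(2v)=2d(v)-d(0)$ to get $d(2^{n}x)=2^{n}(d(x)-d(0))+d(0)$ and play $\beta$-homogeneity against boundedness on $2X$, which correctly forces $d\equiv d(0)$; and you rightly observe that the constant $d(0)$ cannot be eliminated without a normalization, since any constant $c$ with $\|c\|$ below the right-hand side of \eqref{dist(f,g) for Jensen additive} produces a second solution $g+c$ (take $f\equiv 0$, so $g\equiv 0$ and $g'\equiv c$ both qualify when $\varepsilon>0$). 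Thus the uniqueness assertion is literally false as stated and needs the normalization $g'(0)=0$ (equivalently, restricting to odd competitors, which the constructed $g$ satisfies since $g(0)=\lim_{n}4^{-n}f(0)=0$); your treatment of this point is the more careful one and exposes a genuine gap in the paper's own argument.
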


\begin{proof} It is not hard to get $\|f(0)\|\leq \frac{1}{2^\beta}\varepsilon$ with the help of (\ref{odd inequation condition}).  Then for all $x\in X$, by ($a$), we have $0\perp x$ or $x \perp 0$, so combined with (\ref{epsilon Jensen additive}) we can write
    $$
    \|2f\left(\frac{x}{2}\right)-f(x)\|\leq (\frac{1}{2^\beta}+1)\varepsilon
    $$
    and then we have
    $$
    \|2f\left(x\right)-f(2x)\|\leq (\frac{1}{2^\beta}+1)\varepsilon.
    $$

    Then it is easy to get the following inequation:
    $$
    \begin{aligned}&\| 3 f(4 x)-8 f(2 x)-f(-4 x)) \|\\
        &=\| 4\left[ f\left( 4x \right) -2f\left( x \right) \right] +16\left[ f\left( 2x \right) -2f\left( x \right) \right] +\left[ f\left( -4x \right) +f\left( 4x \right) \right] \| \\
        &\leq 4^\beta\cdot (\frac{1}{2^\beta}+1)\varepsilon+16^\beta\cdot(\frac{1}{2^\beta}+1)\varepsilon+\varepsilon \\
        &=(1+2^\beta+4^\beta+8^\beta+16^\beta)\varepsilon.
    \end{aligned}
    $$
    This proves that
    \begin{equation}\label{183inequ for additive}
            \left\|f(2 x)-\frac{3}{8} f(4 x)+\frac{1}{8} f(-4 x)\right\|
            \leq \frac{(1+2^\beta+4^\beta+8^\beta+16^\beta)}{8^\beta}\cdot\varepsilon, \quad x \in X.
    \end{equation}

    Accoding to Lemma \ref{the key lemma}, we can easily get
    \begin{eqnarray}\label{f(2x)inequ for addtive}
        \begin{aligned}
            &\left\|f(2 x)-\frac{2^{n}+1}{2 \cdot 4^{n}} f\left(2^{n+1} x\right)+\frac{2^{n}-1}{2 \cdot 4^{n}} f\left(-2^{n+1} x\right)\right\|\\
            &\leq\left( \sum_{n=1}^{\infty}{\left[ \left( \frac{2^n+1}{2\cdot 4^n} \right) ^{\beta}+\left( \frac{2^n-1}{2\cdot 4^n} \right) ^{\beta} \right] +1} \right) \cdot \frac{1+2^{\beta}+4^{\beta}+8^{\beta}+16^{\beta}}{8^{\beta}}\cdot \varepsilon
        \end{aligned}
    \end{eqnarray}
    for $x\in X$ and $n \in \mathbb{N}$. Moreover, for each $x \in X$ the sequence
    $$
    g_{n}(x):=\frac{2^{n}+1}{2 \cdot 4^{n}} f\left(2^{n} x\right)-\frac{2^{n}-1}{2 \cdot 4^{n}} f\left(-2^{n} x\right), \quad n \in \mathbb{N}
    $$
    is convergent in $Y$.

    Hence, the mapping $g : X \rightarrow Y$ can be defined as:
    \begin{equation*}
        g(x):=\lim _{n \rightarrow \infty} g_{n}(x)
    \end{equation*}
    for all $x \in X$.
    Combining with (\ref{f(2x)inequ for addtive}) we have
    \begin{equation*}
        \begin{aligned}
            &\Vert f(2x) - g(2x)\Vert \\
            &\leq  \left( \sum_{n=1}^{\infty}{\left[ \left( \frac{2^n+1}{2\cdot 4^n} \right) ^{\beta}+\left( \frac{2^n-1}{2\cdot 4^n} \right) ^{\beta} \right] +1} \right) \cdot \frac{1+2^{\beta}+4^{\beta}+8^{\beta}+16^{\beta}}{8^{\beta}}\cdot\varepsilon
        \end{aligned}
    \end{equation*}
    with $x \in X.$

    In order to prove that $g$ is orthogonally additive observe first that for
    $x, y \in X$ such that $x \perp y$ and $n \in N, n > 1$ we have
    $$
    \begin{aligned}&\quad\left\|2 g_{n}\left(\frac{x+y}{2}\right)-g_{n}(x)-g_{n}(y)\right\| \\
        &= \bigg\| \frac{2^{n}+1}{2 \cdot 4^{n}}\cdot 2f\left(\frac{2^{n}(x+y)}{2}\right)-\frac{2^{n}-1}{2 \cdot 4^{n}} \cdot 2f\left(\frac{-2^{n}(x+y)}{2}\right) \\
        &\quad-\frac{2^{n}+1}{2 \cdot 4^{n}} f\left(2^{n} x\right)+\frac{2^{n}-1}{2 \cdot 4^{n}} f\left(-2^{n} x\right)-\frac{2^{n}+1}{2 \cdot 4^{n}} f\left(2^{n} y\right)+\frac{2^{n}-1}{2 \cdot 4^{n}} f\left(-2^{n} y\right) \bigg\| \\
        &=\bigg \| \frac{2^{n}+1}{2 \cdot 4^{n}} \cdot \left[2 f\left(\frac{2^{n}(x+y)}{2}\right)-f\left(2^{n} x\right)-f\left(2^{n} y\right)\right] \\
        &\quad-\frac{2^{n}-1}{2 \cdot 4^{n}} \cdot \left[2 f\left(\frac{2^{n}(-x-y)}{2}\right)-f\left(-2^{n} x\right)-f\left(-2^{n} y\right)\right]\bigg \| \\
        & \leq  \left(\frac{2^{n}+1}{2 \cdot 4^{n}}\right)^\beta\left\|2f\left(\frac{2^{n}(x+y)}{2}\right)-f\left(2^{n} x\right)-f\left(2^{n} y\right)\right\| \\
        &\quad+\left(\frac{2^{n}-1}{2 \cdot 4^{n}}\right)^\beta\left\|2f\left(\frac{2^{n}(-x-y)}{2}\right)-f\left(-2^{n} x\right)-f\left(-2^{n} y\right)\right\| \\
        & \leq  \left[\left(\frac{2^{n}+1}{2 \cdot 4^{n}}\right)^\beta+\left(\frac{2^{n}-1}{2 \cdot 4^{n}}\right)^\beta\right]\cdot \varepsilon.
    \end{aligned}
    $$
    Moreover, letting $n \rightarrow \infty$,  we get (\ref{orth implies Jensen additive}).

    Now, we show the uniqueness of $g$. Assuming $g^{\prime}$ as another mapping satisfying
    (\ref{orth implies Jensen additive}) and (\ref{dist(f,g) for Jensen additive})  that yields:
\begin{equation*}
    \begin{aligned}
        &\quad\left\|g(x)-g^{\prime}(x)\right\|\\
        &\leq\|g(x)-f(x)\|+\left\|g^{\prime}(x)-f(x)\right\|\\
        &\leq 2\left( \sum_{n=1}^{\infty}{\left[ \left( \frac{2^n+1}{2\cdot 4^n} \right) ^{\beta}+\left( \frac{2^n-1}{2\cdot 4^n} \right) ^{\beta} \right] +1} \right) \cdot \frac{1+2^{\beta}+4^{\beta}+8^{\beta}+16^{\beta}}{8^{\beta}}\cdot \varepsilon
    \end{aligned}
\end{equation*}
    for all $x\in 2X$.

    On the other hand, the mapping $g-g^{\prime}$ satisfies (\ref{orth implies Jensen additive}) and thus, in
    particular, (\ref{epsilon Jensen additive}) with $\varepsilon= 0$. By applying (\ref{f(2x)inequ for addtive}) to $g-g^{\prime}$ we see that
    \begin{equation*}
        g(2 x)-g^{\prime}(2 x)=\frac{2^{n}+1}{2 \cdot 4^{n}}\left[g\left(2^{n+1} x\right)-g^{\prime}\left(2^{n+1} x\right)\right]-\frac{2^{n}-1}{2 \cdot 4^{n}}\left[g\left(-2^{n+1} x\right)-g^{\prime}\left(-2^{n+1} x\right)\right]
    \end{equation*}
    and  therefore
    \begin{equation*}
    \begin{aligned}
    &\left\|g(2 x)-g^{\prime}(2 x)\right\|\\
    \leq & \left(\frac{2^{n}+1}{2 \cdot 4^{n}}\right)^{\beta}\left\|g\left(2^{n+1} x\right)-g^{\prime}\left(2^{n+1} x\right)\right\|+\left(\frac{2^{n}-1}{2 \cdot 4^{n}}\right)^{\beta}\left\|g\left(-2^{n+1} x\right)-g^{\prime}\left(-2^{n+1} x\right)\right\| \\
    \leq & \left[\left(\frac{2^{n}+1}{2 \cdot 4^{n}}\right)^\beta+\left(\frac{2^{n}-1}{2 \cdot 4^{n}}\right)^\beta\right]\cdot 2\left( \sum_{n=1}^{\infty}{\left[ \left( \frac{2^n+1}{2\cdot 4^n} \right) ^{\beta}+\left( \frac{2^n-1}{2\cdot 4^n} \right) ^{\beta} \right] +1} \right) \cdot\\ &\frac{1+2^{\beta}+4^{\beta}+8^{\beta}+16^{\beta}}{8^{\beta}}\cdot \varepsilon
    \end{aligned}
    \end{equation*}
    for $x \in X$.

    Combining the both inequalities, we  can easily get the thesis.
\end{proof}

By the same method, we can also obtain the stability result for different target spaces as the following corollary, where the space $Y$ is equipped with quasi-norm.

\begin{corollary}
    Let $X$ be an Abelian group, and $Y$ be a quasi-Banach space.
    For $\varepsilon\geq 0$, assume $f: X\rightarrow Y$ be a mapping such that for all
    $x, y \in X$ one has
    \begin{gather*}
        x \perp y \quad \text { implies } \quad\|2f\left(\frac{x+y}{2}\right)-f(x)-f(y)\| \leq \varepsilon\\
        \text{and }\quad \|f(x)+f(-x)\|\leq \varepsilon.
    \end{gather*}

    Then there exists a mapping $g: X\rightarrow Y$ such that
    \begin{equation*}
        x \perp y \quad \text { implies } \quad 2g\left(\frac{x+y}{2}\right)=g(x)+g(y)
    \end{equation*}
    and
    \begin{equation*}\label{dist(f,g) for Jensen additive}
        \Vert f(x)-g(x)\Vert  \leq \left( \sum_{n=1}^{\infty}{\left[ \left( \frac{2^n+1}{2\cdot 4^n} \right) ^{p}+\left( \frac{2^n-1}{2\cdot 4^n} \right) ^{p} \right] +1} \right)^{\frac{1}{p}} \cdot \frac{(1+2^{p}+4^{p}+8^{p}+16^{p})^{\frac{1}{p}}}{8}\cdot \varepsilon
    \end{equation*}
    for all $x \in 2X = \{2x : x \in X\}$. Moreover, the mapping $g$ is unique on the
    set $2X$.
\end{corollary}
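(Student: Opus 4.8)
The plan is to avoid repeating the telescoping argument of \reft{main theorem for additive} and instead reduce the quasi-Banach setting to the $\beta$-homogeneous $F$-space setting already treated there. The bridge is the Aoki--Rolewicz theorem quoted in the introduction: since $Y$ is quasi-Banach, I may assume the quasi-norm $\|\cdot\|$ is a $p$-norm for some $0 < p \le 1$, i.e. that it is $p$-subadditive, $\|u+v\|^{p} \le \|u\|^{p} + \|v\|^{p}$; this is the form in which the asserted constant is expressed.

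The first step is to manufacture a $p$-homogeneous $F$-norm from the $p$-norm by setting $N(u) := \|u\|^{p}$. I would verify the $F$-norm axioms directly: $N(u)=0$ exactly when $u=0$; the subadditive triangle inequality $N(u+v) \le N(u) + N(v)$ is precisely $p$-subadditivity; and $N(tu) = |t|^{p}\|u\|^{p} = |t|^{p} N(u)$, so $N$ is $p$-homogeneous with $\beta = p$. Since $N(u_{n}-u) \to 0$ if and only if $\|u_{n}-u\| \to 0$, the norms $N$ and $\|\cdot\|$ induce the same topology, so completeness of $(Y,\|\cdot\|)$ transfers and $(Y,N)$ is a $p$-homogeneous $F$-space.

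The second step is to translate the hypotheses and invoke \reft{main theorem for additive} with $\beta = p$. In the $N$-norm the two control inequalities become $N\!\left(2f(\tfrac{x+y}{2})-f(x)-f(y)\right) \le \varepsilon^{p}$ for $x \perp y$ and $N(f(x)+f(-x)) \le \varepsilon^{p}$; that is, the theorem applies with $\varepsilon$ replaced by $\varepsilon^{p}$. It produces a mapping $g$ obeying the orthogonal Jensen additive identity $2g(\tfrac{x+y}{2}) = g(x)+g(y)$ for $x \perp y$ --- an equality between elements of $Y$, hence independent of which norm we use --- together with
\[
N(f(x) - g(x)) \le \Big(\sum_{n=1}^{\infty}\big[(\tfrac{2^{n}+1}{2\cdot 4^{n}})^{p} + (\tfrac{2^{n}-1}{2\cdot 4^{n}})^{p}\big] + 1\Big)\cdot\frac{1 + 2^{p} + 4^{p} + 8^{p} + 16^{p}}{8^{p}}\cdot\varepsilon^{p}
\]
for $x \in 2X$. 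Note that the mapping $g = \lim_{n} g_{n}$ is defined by the same algebraic formula regardless of the chosen norm, so no ambiguity arises.

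Finally, I would undo the substitution by taking $p$-th roots, using $\|u\| = N(u)^{1/p}$ and $(8^{p})^{1/p} = 8$, which turns the last display into
\[
\|f(x) - g(x)\| \le \Big(\sum_{n=1}^{\infty}\big[(\tfrac{2^{n}+1}{2\cdot 4^{n}})^{p} + (\tfrac{2^{n}-1}{2\cdot 4^{n}})^{p}\big] + 1\Big)^{1/p}\cdot\frac{(1 + 2^{p} + 4^{p} + 8^{p} + 16^{p})^{1/p}}{8}\cdot\varepsilon,
\]
exactly the claimed bound; uniqueness on $2X$ transfers verbatim, since any competing solution $g'$ satisfies the same equation with $\varepsilon=0$ and the doubling relation forces $\|g(2x)-g'(2x)\| \to 0$. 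I do not expect a genuine obstacle: the only points demanding care are the bookkeeping of the exponent --- inserting the $p$-th power on the error terms \emph{before} applying \reft{main theorem for additive} and the $p$-th root on the whole bound \emph{after} --- and confirming that the equivalence of $N$ and $\|\cdot\|$ legitimately transports both completeness and the limit defining $g$.
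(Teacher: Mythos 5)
Your proposal is correct and follows essentially the same route as the paper: pass to $N(\cdot)=\|\cdot\|^{p}$, observe that this is a $p$-homogeneous $F$-norm on a complete space, apply Theorem~\ref{main theorem for additive} with $\beta=p$ and $\varepsilon$ replaced by $\varepsilon^{p}$, and take $p$-th roots at the end. The only difference is that you make explicit the Aoki--Rolewicz step (that the quasi-norm may be taken $p$-subadditive, which is what makes $N$ subadditive), a point the paper's proof leaves implicit.
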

\begin{proof}
    Let $\|\cdot\|_p=\|\cdot\|^p$, then it is obviously that $(Y, \|\cdot\|_p)$ is $p$-homogeneous, we obtain
    \begin{equation*}
        \begin{gathered}
            x \perp y \quad \text { implies } \quad\|2f(\frac{x+y}{2})-f(x)-f(y)\|_p \leq \varepsilon^p\\
            and \quad \|f(x)+f(-x)\|_p\varepsilon^p.
        \end{gathered}
    \end{equation*}
    According to Theorem \ref{main theorem for additive}, we obtain that there exists a mapping $g: X\rightarrow Y$ such that
    $$
    x \perp y \quad \text { implies } \quad 2g(\frac{x+y}{2})=g(x)+g(y)
    $$
    and
    \begin{equation*}
        \Vert f(x)-g(x)\Vert_p\leq \left( \sum_{n=1}^{\infty}{\left[ \left( \frac{2^n+1}{2\cdot 4^n} \right) ^{\beta}+\left( \frac{2^n-1}{2\cdot 4^n} \right) ^{\beta} \right] +1} \right) \cdot \frac{1+2^{\beta}+4^{\beta}+8^{\beta}+16^{\beta}}{8^{\beta}}\cdot\varepsilon^p
    \end{equation*}
    for all $x \in 2X = \{2x : x \in X\}$. Moreover, the mapping $g$ is unique on the
    set $2X$ and the claim follows.
\end{proof}

    \section{Stability of the orthogonally Jensen quadractic functional equation}

\quad Applying and extending some ideas from \cite{2010otso}, we deal with the conditional stability problem for (\ref{Jensen quadratic}).

The main result of this section: In this section, let $X$ be an Abelian group and let $\perp$ be a binary relation defined on $X$ with the properties:

($a'$) for all $x\in X, 0\perp x$ and $x\perp 0$;

($b'$) if $x, y \in X$ and $x\perp y$, then $-x\perp -y$ and $2x\perp 2y$.

\begin{theorem}
    Let $X$ be an Abelian group, and $Y$ be a $\beta$-homogeneous $F$-space.
    For $\varepsilon\geq 0$, assume $f: X\rightarrow Y$ be a mapping such that for all
    $x, y \in X$ one has
    \begin{eqnarray}\label{epsilon Jensen quadractic}
        \begin{aligned}
            x \perp y \quad \text { implies } \quad \|2 f\left(\frac{x+y}{2}\right)+2 f\left(\frac{x-y}{2}\right) -f(x)-f(y)\| \leq \varepsilon
        \end{aligned}
    \end{eqnarray}
    Then there exists a mapping $g: X\rightarrow Y$ such that
    \begin{eqnarray}\label{orth implies Jensen quadractic}
        \begin{aligned}
            x \perp y \quad \text { implies } \quad 2g\left(\frac{x+y}{2}\right)+2g\left(\frac{x-y}{2}\right)=g(x)+g(y)
        \end{aligned}
    \end{eqnarray}
    and
    \begin{equation}\label{dist(f,g) for Jensen quadractic}
            \Vert f(x)-g(x)\Vert\leq \left( \sum_{n=1}^{\infty}{\left[ \left( \frac{2^n+1}{2\cdot 4^n} \right) ^{\beta}+\left( \frac{2^n-1}{2\cdot 4^n} \right) ^{\beta} \right]}+1 \right) \cdot \frac{1+2^{\beta}+2^{1-\beta}+2^{1-2\beta}}{8^{\beta}}\cdot \varepsilon
    \end{equation}
    for all $x \in 2X = \{2x : x \in X\}$. Moreover, the mapping $g$ is unique on the
    set $2X$.
\end{theorem}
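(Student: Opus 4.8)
The plan is to reduce everything to \refl{the key lemma} and then to follow the proof of \reft{main theorem for additive} almost verbatim; the genuinely new work is confined to producing the hypothesis \eqref{183inequation} of the key lemma with the correct constant, because here---unlike in the additive case---no parity condition on $f$ is assumed and it must instead be extracted from the equation itself. The observation that makes this possible is that \refl{the key lemma} is blind to the distinction between additivity and quadraticity: it sees only the combination $f(2x)-\tfrac38 f(4x)+\tfrac18 f(-4x)$ and the averaged sequence $g_n(x)=\frac{2^n+1}{2\cdot4^n}f(2^nx)-\frac{2^n-1}{2\cdot4^n}f(-2^nx)$, both of which happen to annihilate quadratic maps just as they annihilate additive ones.

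First I would record the elementary consequences of \eqref{epsilon Jensen quadractic} obtained by feeding in the trivial orthogonal pairs guaranteed by $(a')$. Taking $x=y=0$ (so $0\perp 0$) gives $\|2f(0)\|\le\varepsilon$, hence $\|f(0)\|\le 2^{-\beta}\varepsilon$. Taking the pair $(x,0)$ with $x\perp 0$ yields $\|4f(\tfrac x2)-f(x)-f(0)\|\le\varepsilon$ (which after $x\mapsto 2x$ gives the doubling estimate $\|f(2x)-4f(x)+f(0)\|\le\varepsilon$), while taking $(0,x)$ with $0\perp x$ yields $\|2f(\tfrac x2)+2f(-\tfrac x2)-f(0)-f(x)\|\le\varepsilon$. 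The crucial point is that subtracting these two half-scale inequalities cancels $f(x)$ and $f(0)$, leaving $2f(\tfrac x2)-2f(-\tfrac x2)$; thus $\|2f(\tfrac x2)-2f(-\tfrac x2)\|\le 2\varepsilon$ and so $\|f(x)-f(-x)\|\le 2^{1-\beta}\varepsilon$. This parity estimate plays exactly the role that the \emph{hypothesis} $\|f(x)+f(-x)\|\le\varepsilon$ played in the additive theorem, but now it is derived rather than assumed---which is precisely why $(a')$ here demands both $0\perp x$ \emph{and} $x\perp 0$, whereas $(a)$ needed only one of them. Extracting the parity control from the single inequality \eqref{epsilon Jensen quadractic} is the one place I expect any real difficulty; everything after it is formal.

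With the three estimates ($f(0)$-bound, doubling, parity) in hand I would assemble \eqref{183inequation}. Writing $f(-4x)=f(4x)+[f(-4x)-f(4x)]$ and invoking the parity estimate at scale $4x$ reduces $f(2x)-\tfrac38 f(4x)+\tfrac18 f(-4x)$ to $f(2x)-\tfrac14 f(4x)+\tfrac18[f(-4x)-f(4x)]$; the first two terms equal $\tfrac14\bigl(4f(2x)-f(4x)\bigr)$, which is controlled by the doubling estimate at scale $2x$ together with the bound on $f(0)$. Collecting the three contributions and using $\beta$-homogeneity gives a bound no larger than
\[
    \Big\|f(2x)-\tfrac38 f(4x)+\tfrac18 f(-4x)\Big\|\le \frac{1+2^{\beta}+2^{1-\beta}+2^{1-2\beta}}{8^{\beta}}\,\varepsilon,\qquad x\in X,
\]
which is exactly \eqref{183inequation} with $C=\tfrac{1+2^{\beta}+2^{1-\beta}+2^{1-2\beta}}{8^{\beta}}\,\varepsilon$. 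Now \refl{the key lemma} applies directly: it produces the Cauchy sequence $g_n$, the limit $g(x)=\lim_n g_n(x)$, and the estimate $\|f(2x)-g(2x)\|\le C\bigl(\sum_{n\ge1}[\cdots]+1\bigr)$, which is precisely \eqref{dist(f,g) for Jensen quadractic} on $2X$.

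To check that $g$ satisfies \eqref{orth implies Jensen quadractic}, fix $x\perp y$; iterating $(b')$ gives $2^nx\perp 2^ny$ and $-2^nx\perp -2^ny$, so applying \eqref{epsilon Jensen quadractic} to these two pairs and factoring out the coefficients of $g_n$ yields
\[
    \Big\|2g_n\big(\tfrac{x+y}2\big)+2g_n\big(\tfrac{x-y}2\big)-g_n(x)-g_n(y)\Big\|\le\Big[\big(\tfrac{2^n+1}{2\cdot4^n}\big)^{\beta}+\big(\tfrac{2^n-1}{2\cdot4^n}\big)^{\beta}\Big]\varepsilon,
\]
exactly as in the additive computation; letting $n\to\infty$ makes the right-hand side vanish and gives \eqref{orth implies Jensen quadractic}. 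Finally, uniqueness on $2X$ is identical to the additive case: if $g'$ also satisfies \eqref{orth implies Jensen quadractic} and \eqref{dist(f,g) for Jensen quadractic}, then $g-g'$ satisfies the equation with $\varepsilon=0$, so the key-lemma identity holds for it exactly, giving $g(2x)-g'(2x)=\frac{2^n+1}{2\cdot4^n}[g-g'](2^{n+1}x)-\frac{2^n-1}{2\cdot4^n}[g-g'](-2^{n+1}x)$; bounding the right-hand side by $[(\tfrac{2^n+1}{2\cdot4^n})^{\beta}+(\tfrac{2^n-1}{2\cdot4^n})^{\beta}]$ times the uniform bound on $\|g-g'\|$ and letting $n\to\infty$ (the coefficients tend to $0$) forces $g=g'$ on $2X$.
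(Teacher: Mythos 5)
Your proposal is correct and follows essentially the same route as the paper: extract $\|f(0)\|\le 2^{-\beta}\varepsilon$, the doubling estimate, and the parity estimate from the pairs $0\perp 0$, $x\perp 0$, $0\perp x$, assemble the bound on $f(2x)-\tfrac38f(4x)+\tfrac18f(-4x)$, invoke Lemma \ref{the key lemma}, and finish the quadraticity and uniqueness arguments exactly as in the additive case. The only deviation is that by subtracting the two half-scale inequalities before eliminating $f(0)$ you get the sharper parity bound $\|f(x)-f(-x)\|\le 2^{1-\beta}\varepsilon$ (the paper gets $2^{1-\beta}(1+2^{-\beta})\varepsilon$), so your assembled constant is slightly smaller than, and hence still consistent with, the one stated in \eqref{dist(f,g) for Jensen quadractic}.
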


\begin{proof} For all $x\in X$. By ($a'$), we have $0\perp 0$, $0\perp x$ and $x\perp 0$, so according to $0\perp 0$ and (\ref{epsilon Jensen quadractic}) we have
    $$
    \|2f(0)+2f(0)-f(0)-f(0)\|\leq \varepsilon
    $$
    and then we have
    \begin{equation}\label{f(0) leq epsilon/2}
        \|f(0)\|\leq \frac{\varepsilon}{2^\beta}.
    \end{equation}
    According to $0\perp x$, we can write
    $$
    \|2f\left(\frac{x}{2}\right)+2f\left(-\frac{x}{2}\right)-f(0)-f(x)\|\leq \varepsilon
    $$
    it is easy to get
    \begin{equation}\label{estimate of 2f(x/2)+2f(-x/2)-f(x)}
        \|2f\left(\frac{x}{2}\right)+2f\left(-\frac{x}{2}\right)-f(x)\|\leq (\frac{1}{2^\beta}+1)\varepsilon
    \end{equation}
    Since $x\perp 0$, we have
    $$
    \|2f\left(\frac{x}{2}\right)+2f\left(\frac{x}{2}\right)-f(x)-f(0)\|\leq \varepsilon
    $$
    combined with (\ref{f(0) leq epsilon/2}), it is obvious that
    \begin{equation}\label{estime of 4f(x/2)-f(x)}
        \|4f\left(\frac{x}{2}\right)-f(x)\|\leq (\frac{1}{2^\beta}+1)\varepsilon.
    \end{equation}

    On the other hand, by (\ref{estimate of 2f(x/2)+2f(-x/2)-f(x)}) and (\ref{estime of 4f(x/2)-f(x)}), we have
    $$
    \|2f\left(\frac{x}{2}\right)+2f\left(-\frac{x}{2}\right)-4f\left(\frac{x}{2}\right)\|\leq 2(\frac{1}{2^\beta}+1)\varepsilon
    $$
    so we have
    $$
    \|f\left(\frac{x}{2}\right)-f\left(-\frac{x}{2}\right)\|\leq 2^{1-\beta}(\frac{1}{2^\beta}+1)\varepsilon
    $$
    this means that
    \begin{equation*}
        \|f(x)-f(-x)\|\leq 2^{1-\beta}(\frac{1}{2^\beta}+1)\varepsilon.
    \end{equation*}
    Then it is easy to get
    $$
    \begin{aligned}&\| 3 f(4 x)-8 f(2 x)-f(-4 x)) \|\\
        &=\|\left[ 2f\left( 4x \right) -8f\left( 2x \right) \right] +\left[ f\left( 4x \right) -f\left( -4x \right) \right] \| \\
        &\leq 2^{\beta}\left( \frac{1}{2^{\beta}}+1 \right) \varepsilon +2^{1-\beta}\left( \frac{1}{2^{\beta}}+1 \right) \varepsilon \\
        &=(1+2^{\beta}+2^{1-\beta}+2^{1-2\beta})\varepsilon.
    \end{aligned}
    $$
    This proves that
    \begin{eqnarray}\label{183ineq for quadratic}
        \begin{aligned}
            \left\|f(2 x)-\frac{3}{8} f(4 x)+\frac{1}{8} f(-4 x)\right\| \leq \frac{1+2^{\beta}+2^{1-\beta}+2^{1-2\beta}}{8^\beta}\varepsilon, \quad x \in X.
        \end{aligned}
    \end{eqnarray}

    Accoding to Lemma \ref{the key lemma}, we can easily get
    \begin{equation}\label{f(2x)inequ for quadratic}
        \begin{aligned}
            &\left\|f(2 x)-\frac{2^{n}+1}{2 \cdot 4^{n}} f\left(2^{n+1} x\right)+\frac{2^{n}-1}{2 \cdot 4^{n}} f\left(-2^{n+1} x\right)\right\| \\ &\leq \left( \sum_{n=1}^{\infty}{\left[ \left( \frac{2^n+1}{2\cdot 4^n} \right) ^{\beta}+\left( \frac{2^n-1}{2\cdot 4^n} \right) ^{\beta} \right]}+1 \right) \cdot \frac{1+2^{\beta}+2^{1-\beta}+2^{1-2\beta}}{8^{\beta}} \varepsilon.
        \end{aligned}
    \end{equation}
for $x\in X$ and $n \in \mathbb{N}$. Moreover, for each $x \in X$ the sequence
$$
g_{n}(x):=\frac{2^{n}+1}{2 \cdot 4^{n}} f\left(2^{n} x\right)-\frac{2^{n}-1}{2 \cdot 4^{n}} f\left(-2^{n} x\right), \quad n \in \mathbb{N}
$$
is convergent in $Y$.

    Hence, the mapping $g : X \rightarrow Y$ can be defined as:
    \begin{equation*}
        g(x):=\lim _{n \rightarrow \infty} g_{n}(x)
    \end{equation*}
    for all $x \in X$.
    Combining with (\ref{f(2x)inequ for quadratic}) we have
    $$\Vert f(2x) - g(2x)\Vert \leq  \left( \sum_{n=1}^{\infty}{\left[ \left( \frac{2^n+1}{2\cdot 4^n} \right) ^{\beta}+\left( \frac{2^n-1}{2\cdot 4^n} \right) ^{\beta} \right]}+1 \right) \cdot \frac{1+2^{\beta}+2^{1-\beta}+2^{1-2\beta}}{8^{\beta}}\varepsilon, \quad x \in X.$$
    In order to prove that $g$ is orthogonally additive observe first that for
    $x, y \in X$ such that $x \perp y$ and $n \in N, n > 1$ we have
    \begin{equation*}
        \begin{aligned}&\quad\left\|2 g_{n}\left(\frac{x+y}{2}\right)+2 g_{n}\left(\frac{x-y}{2}\right)-g_{n}(x)-g_{n}(y)\right\| \\
            &= \bigg\| \frac{2^{n}+1}{2 \cdot 4^{n}}\cdot 2f\left(\frac{2^{n}(x+y)}{2}\right)-\frac{2^{n}-1}{2 \cdot 4^{n}} \cdot 2f\left(\frac{-2^{n}(x+y)}{2}\right) \\
            &\quad +\frac{2^{n}+1}{2 \cdot 4^{n}}\cdot 2f\left(\frac{2^{n}(x-y)}{2}\right)-\frac{2^{n}-1}{2 \cdot 4^{n}} \cdot 2f\left(\frac{-2^{n}(x-y)}{2}\right)\\
            &\quad-\frac{2^{n}+1}{2 \cdot 4^{n}} f\left(2^{n} x\right)+\frac{2^{n}-1}{2 \cdot 4^{n}} f\left(-2^{n} x\right)-\frac{2^{n}+1}{2 \cdot 4^{n}} f\left(2^{n} y\right)+\frac{2^{n}-1}{2 \cdot 4^{n}} f\left(-2^{n} y\right) \bigg\| \\
            &=\bigg \| \frac{2^{n}+1}{2 \cdot 4^{n}} \cdot \left[2 f\left(\frac{2^{n}(x+y)}{2}\right)+2 f\left(\frac{2^{n}(x-y)}{2}\right)-f\left(2^{n} x\right)-f\left(2^{n} y\right)\right] \\
            &\quad-\frac{2^{n}-1}{2 \cdot 4^{n}} \cdot \left[2 f\left(\frac{2^{n}(-x-y)}{2}\right)+2 f\left(\frac{2^{n}(-x+y)}{2}\right)-f\left(-2^{n} x\right)-f\left(-2^{n} y\right)\right]\bigg \| \\
            & \leq  \left(\frac{2^{n}+1}{2 \cdot 4^{n}}\right)^\beta\left\|2f\left(\frac{2^{n}(x+y)}{2}\right)+2 f\left(\frac{2^{n}(x-y)}{2}\right)-f\left(2^{n} x\right)-f\left(2^{n} y\right)\right\| \\
            &\quad+\left(\frac{2^{n}-1}{2 \cdot 4^{n}}\right)^\beta\left\|2f\left(\frac{2^{n}(-x-y)}{2}\right)+2f\left(\frac{2^{n}(-x+y)}{2}\right)-f\left(-2^{n} x\right)-f\left(-2^{n} y\right)\right\| \\
            & \leq  \left[ \left( \frac{2^n+1}{2\cdot 4^n} \right) ^{\beta}+\left( \frac{2^n-1}{2\cdot 4^n} \right) ^{\beta} \right] \varepsilon.
        \end{aligned}
    \end{equation*}
    Moreover, letting $n \rightarrow \infty$,  we get (\ref{orth implies Jensen quadractic}).

    Now, we show the uniqueness of $g$. Assuming $g^{\prime}$ as another   mapping satisfying
    (\ref{orth implies Jensen quadractic}) and (\ref{dist(f,g) for Jensen quadractic})  that yields:
    \begin{equation*}
        \begin{aligned}
            &\left\|g(x)-g^{\prime}(x)\right\|\\ &\leq\|g(x)-f(x)\|+\left\|g^{\prime}(x)-f(x)\right\|\\
            &\leq 2\left( \sum_{n=1}^{\infty}{\left[ \left( \frac{2^n+1}{2\cdot 4^n} \right) ^{\beta}+\left( \frac{2^n-1}{2\cdot 4^n} \right) ^{\beta} \right]}+1 \right) \cdot \frac{1+2^{\beta}+2^{1-\beta}+2^{1-2\beta}}{8^{\beta}} \varepsilon
        \end{aligned}
    \end{equation*}
    for all $x\in 2X$.

    On the other hand, the mapping $g-g^{\prime}$ satisfies (\ref{orth implies Jensen quadractic}) and thus, in
    particular, (\ref{epsilon Jensen quadractic}) with $\varepsilon= 0$. By applying (\ref{f(2x)inequ for quadratic}) to $g-g^{\prime}$ we see that
    $$
    \begin{aligned} g(2 x)-g^{\prime}(2 x)=& \frac{2^{n}+1}{2 \cdot 4^{n}}\left[g\left(2^{n+1} x\right)-g^{\prime}\left(2^{n+1} x\right)\right] \\ &-\frac{2^{n}-1}{2 \cdot 4^{n}}\left[g\left(-2^{n+1} x\right)-g^{\prime}\left(-2^{n+1} x\right)\right] \end{aligned}
    $$
    and  therefore
    \begin{equation*}
    \begin{aligned}
        &\left\|g(2 x)-g^{\prime}(2 x)\right\|\\
        \leq & \left(\frac{2^{n}+1}{2 \cdot 4^{n}}\right)^{\beta}\left\|g\left(2^{n+1} x\right)-g^{\prime}\left(2^{n+1} x\right)\right\|+\left(\frac{2^{n}-1}{2 \cdot 4^{n}}\right)^{\beta}\left\|g\left(-2^{n+1} x\right)-g^{\prime}\left(-2^{n+1} x\right)\right\| \\
        \leq & \left[\left(\frac{2^{n}+1}{2 \cdot 4^{n}}\right)^\beta+\left(\frac{2^{n}-1}{2 \cdot 4^{n}}\right)^\beta\right]\cdot 2\left( \sum_{n=1}^{\infty}{\left[ \left( \frac{2^n+1}{2\cdot 4^n} \right) ^{\beta}+\left( \frac{2^n-1}{2\cdot 4^n} \right) ^{\beta} \right] +1} \right) \cdot\\ &\frac{1+2^{\beta}+2^{1-\beta}+2^{1-2\beta}}{8^{\beta}} \varepsilon
    \end{aligned}
\end{equation*}
    for $x \in X$.

    Combining the both inequalities, we  can easily get the thesis.
\end{proof}

By the same method, we can also obtain the stability result for different target spaces as the following corollary, where the space $Y$ is equipped with quasi-norm.

\begin{corollary}
    Let $X$ be an Abelian group, and $Y$ be a quasi-Banach space.
    For $\varepsilon\geq 0$, assume $f: X\rightarrow Y$ be a mapping such that for all
    $x, y \in X$ one has
    \begin{equation*}
        x \perp y \quad \text { implies } \quad \|2 f\left(\frac{x+y}{2}\right)+2 f\left(\frac{x-y}{2}\right) -f(x)-f(y)\| \leq \varepsilon
    \end{equation*}

    Then there exists a mapping $g: X\rightarrow Y$ such that
    \begin{equation*}
        x \perp y \quad \text { implies } \quad 2g\left(\frac{x+y}{2}\right)+2g\left(\frac{x-y}{2}\right)=g(x)+g(y)
    \end{equation*}
    and
    \begin{equation*}\label{dist(f,g) for Jensen additive}
        \Vert f(x)-g(x)\Vert  \leq \left( \sum_{n=1}^{\infty}{\left[ \left( \frac{2^n+1}{2\cdot 4^n} \right) ^{p}+\left( \frac{2^n-1}{2\cdot 4^n} \right) ^{p} \right]}+1 \right)^{\frac{1}{p}} \cdot \frac{(1+2^{p}+2^{1-p}+2^{1-2p})^\frac{1}{p}}{8}\cdot \varepsilon
    \end{equation*}
    for all $x \in 2X = \{2x : x \in X\}$. Moreover, the mapping $g$ is unique on the
    set $2X$.
\end{corollary}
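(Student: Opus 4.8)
The plan is to deduce this statement from the preceding quadratic theorem by a change of norm, exactly along the lines used for the Jensen additive corollary. The single idea is that a suitable power of the quasi-norm is an equivalent $p$-homogeneous $F$-norm, so that the quadratic theorem applies with $\beta=p$ and the conclusion is recovered by taking $p$-th roots.

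First I would invoke the Aoki--Rolewicz theorem recalled in the introduction: since $Y$ is a quasi-Banach space, there exists $0<p\le 1$ such that $\|\cdot\|$ is equivalent to a $p$-norm, and so without loss of generality we may assume that $\|\cdot\|$ itself satisfies $\|x+y\|^{p}\le\|x\|^{p}+\|y\|^{p}$ for all $x,y\in Y$. Setting $\|\cdot\|_{p}:=\|\cdot\|^{p}$, one checks at once that $\|ax\|_{p}=|a|^{p}\|x\|_{p}$ and $\|x+y\|_{p}\le\|x\|_{p}+\|y\|_{p}$, whence $(Y,\|\cdot\|_{p})$ is a $p$-homogeneous $F$-space, i.e.\ a $\beta$-homogeneous $F$-space with $\beta=p$; completeness is inherited since a sequence is $\|\cdot\|_{p}$-Cauchy exactly when it is $\|\cdot\|$-Cauchy, and likewise for convergence.

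Next I would transcribe the hypothesis into this norm. Raising the assumed estimate to the $p$-th power gives
\[
x\perp y\quad\Longrightarrow\quad\Big\|2f\Big(\tfrac{x+y}{2}\Big)+2f\Big(\tfrac{x-y}{2}\Big)-f(x)-f(y)\Big\|_{p}\le\varepsilon^{p},
\]
which is precisely the hypothesis of the quadratic theorem for the target space $(Y,\|\cdot\|_{p})$, with homogeneity exponent $\beta=p$ and perturbation constant $\varepsilon^{p}$. Applying that theorem produces a map $g:X\to Y$ obeying $2g(\tfrac{x+y}{2})+2g(\tfrac{x-y}{2})=g(x)+g(y)$ whenever $x\perp y$, together with
\[
\|f(x)-g(x)\|_{p}\le\Big(\sum_{n=1}^{\infty}\big[(\tfrac{2^{n}+1}{2\cdot4^{n}})^{p}+(\tfrac{2^{n}-1}{2\cdot4^{n}})^{p}\big]+1\Big)\cdot\frac{1+2^{p}+2^{1-p}+2^{1-2p}}{8^{p}}\cdot\varepsilon^{p}
\]
for all $x\in 2X$, and $g$ is the unique such map on $2X$.

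Finally I would take $p$-th roots. Because $\|f(x)-g(x)\|_{p}=\|f(x)-g(x)\|^{p}$, the denominator $8^{p}$ becomes $8$, while the bracketed sum and the factor $1+2^{p}+2^{1-p}+2^{1-2p}$ each pick up the exponent $1/p$, and $\varepsilon^{p}$ returns to $\varepsilon$; this is exactly the asserted bound. The Jensen quadratic identity for $g$ and its uniqueness on $2X$ are norm-free and transfer unchanged. I anticipate no genuine obstacle: the only delicate points are the legitimacy of passing to the equivalent $p$-norm via Aoki--Rolewicz and the bookkeeping of the exponent $1/p$ when returning from $\|\cdot\|_{p}$ to $\|\cdot\|$, both of which are routine.
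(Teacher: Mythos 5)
Your proposal is correct and follows essentially the same route as the paper: pass to $\|\cdot\|_p=\|\cdot\|^p$, observe that $(Y,\|\cdot\|_p)$ is a $p$-homogeneous $F$-space, apply the quadratic stability theorem with $\beta=p$ and perturbation $\varepsilon^p$, and take $p$-th roots. If anything you are more careful than the paper, which silently assumes the quasi-norm is already $p$-subadditive (you invoke Aoki--Rolewicz explicitly, though note the equivalence constant there would in principle perturb the stated bound) and which mistakenly cites the additive theorem where the quadratic one is meant.
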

\begin{proof}
    Let $\|\cdot\|_p=\|\cdot\|^p$, then it is obviously that $(Y, \|\cdot\|_p)$ is $p$-homogeneous, we obtain
    \begin{equation*}
            x \perp y \quad \text { implies } \quad\|2 f\left(\frac{x+y}{2}\right)+2 f\left(\frac{x-y}{2}\right) -f(x)-f(y)\|_p \leq \varepsilon^p.
    \end{equation*}
    According to Theorem \ref{main theorem for additive}, we obtain that there exists a mapping $g: X\rightarrow Y$ such that
    \begin{equation*}
        x \perp y \quad \text { implies } \quad 2g\left(\frac{x+y}{2}\right)+2g\left(\frac{x-y}{2}\right)=g(x)+g(y)
    \end{equation*}
    and
    \begin{equation*}
        \Vert f(x)-g(x)\Vert_p\leq \left( \sum_{n=1}^{\infty}{\left[ \left( \frac{2^n+1}{2\cdot 4^n} \right) ^{p}+\left( \frac{2^n-1}{2\cdot 4^n} \right) ^{p} \right] +1} \right) \cdot \frac{1+2^{p}+2^{1-p}+2^{1-2p}}{8^p}\cdot\varepsilon^p
    \end{equation*}
    for all $x \in 2X = \{2x : x \in X\}$. Moreover, the mapping $g$ is unique on the
    set $2X$ and the claim follows.
\end{proof}

	\section*{Acknowledgments}
	We would like to acknowledge the assistance of volunteers in putting
	together this example manuscript and supplement.

    \bibliographystyle{listbib}
    \bibliography{reference}

\end{document}